\theoremstyle{plain}
\newtheorem{theorem}{Theorem}
\newtheorem{lemma}{Lemma}[section]
\theoremstyle{definition}
\theoremstyle{remark}
\newcommand{\R}{\mathbb{R}}
\newcommand{\C}{\mathbb{C}}
\newcommand{\Z}{\mathbb{Z}}
\newcommand{\N}{\mathbb{N}}
\newcommand{\T}{\mathbb{T}}
\newcommand{\X}{\mathbb{X}}
\newcommand{\Ind}{\mathbf{1}}
\newcommand{\supp}{\operatorname{supp}}
\newcommand{\setcomp}[1]{{#1}^{\mathsf{c}}}
\newcommand{\interior}[1]{%
	{\kern0pt#1}^{\circ}%
}
\let\epsilon\varepsilon
\titleformat{\section}{\centering\normalfont\Large\scshape}{\thesection.}{0.5em}{}
\titleformat{\subsection}{\centering\normalfont\large\scshape}{}{0em}{}
\title{\scshape{Existence and smoothness of extremizers \\ for convolution with \\ compactly supported measures}}
\author{James Tautges}
\date{}
\begin{document}

\maketitle
\vspace{-2em}

\begin{abstract}
    In this article, we establish various facts about extremizers for $L^p$-improving convolution operators $T\colon L^p \rightarrow L^q$ associated with compactly-supported probability measures on either $\R^d$ or $\T^d$. 
    If $\sigma$ has positive Fourier decay, we prove that extremizers exist and extremizing sequences are precompact modulo translation for all ``non-endpoint" $(p,q)$.
    These extremizers also satisfy an interesting positivity property and belong to $C^\infty_{loc} \cap L^\infty$.
\end{abstract}

\section{Introduction}

Let $d \geq 1$ and $\X \in \{\R^d, \T^d\}$.

For any linear operator $T\colon \mathcal{D}(\X) \rightarrow \mathcal{D}'(\X)$, define
\[
    R(T) = \{(\tfrac1p, \tfrac1q) \in [0,1]^2 : \|T\|_{p,q} < \infty\}.
\]

We call a pair of exponents $(p, q)$ \textit{valid} if $(\tfrac1p, \tfrac1q) \in R(T)$ and \textit{non-endpoint} if $(\tfrac1p, \tfrac1q) \in \interior{R(T)}$. 
Throughout, we will implicitly assume that $T$ is non-trivial.

If $(\tfrac1p, \tfrac1q) \in R(T)$, an \textit{extremizer} for $T\colon L^p(\X) \rightarrow L^q(\X)$ is a non-zero function $f \in L^p(\X)$ such that $\|Tf\|_q = \|T\|_{p,q}\|f\|_p$. 
An \textit{extremizing sequence} is a sequence $f_n \in L^p(\X)$ such that $\|Tf_n\|_q/\|f_n\|_p \rightarrow \|T\|_{p,q}$ as $n \rightarrow \infty$.

We say that a sequence $f_n \in L^p(\X)$ is \textit{normalized} if $\|f_n\|_p = 1$ for all $n$.

\begin{theorem}\label{T: existence}
    Let $\sigma$ be a compactly-supported probability measure such that $|\widehat\sigma(\xi)| \lesssim \langle \xi\rangle^{-\alpha}$ for some $\alpha > 0$ and
    \begin{equation}\label{E: conv op def}
        Tf(x) = \int_\X f(x-y) \, d\sigma(y).
    \end{equation}
    Assume that $1 < p < q < \infty$ is a valid, non-endpoint pair of exponents for $T$. 
    Then extremizing sequences for $T\colon L^p(\X) \rightarrow L^q(\X)$ are precompact (modulo translation when $\X = \R^d$). 
    In particular, extremizers exist.

    Furthermore, for all extremizers $f$ there exists a unimodular $\omega_0 \in \C$ such that
    \[
        \inf_{x \in W} \omega_0 f(x) > 0
    \]
    for all compact $W \subset \X$ and $f \in C^\infty_{loc}(\X) \cap L^\infty(\X)$.
\end{theorem}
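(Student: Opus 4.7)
The plan is to establish precompactness of extremizing sequences first (which directly yields existence), then derive the Euler--Lagrange equation and exploit the Fourier decay of $\sigma$ to obtain the remaining properties. For precompactness, I would run a concentration-compactness argument on a normalized extremizing sequence $f_n$. The Fourier decay hypothesis should yield a refined $L^p$-improving inequality of the form $\|Tf\|_q \lesssim \|f\|_p^{1-\theta} \|f\|_U^\theta$ for some weaker translation-invariant seminorm $\|\cdot\|_U$ (for instance, a supremum bound on a Littlewood--Paley piece at the critical frequency scale set by $\alpha$); this rules out vanishing. In the $\R^d$ case I then translate the $f_n$ so that mass localizes near the origin and extract a nonzero weak limit $f$ (on the torus no translation is needed). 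A Brezis--Lieb-type decomposition with $r_n := f_n - f$ yields
\[
    \|f\|_p^p + \|r_n\|_p^p \to 1, \qquad \|Tf\|_q^q + \|Tr_n\|_q^q \to \|T\|_{p,q}^q.
\]
Combining the operator bounds $\|Tf\|_q \le \|T\|_{p,q}\|f\|_p$ and $\|Tr_n\|_q \le \|T\|_{p,q}\|r_n\|_p$ with the strict superadditivity $a^{q/p} + b^{q/p} < (a+b)^{q/p}$ for $a, b > 0$ and $q > p$ then forces $\|r_n\|_p \to 0$. Hence $f_n \to f$ strongly in $L^p$ and $f$ is an extremizer.

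For the constant-phase property, $|Tf(x)| \le T|f|(x)$ pointwise together with $\||f|\|_p = \|f\|_p$ shows that $|f|$ is also an extremizer, so $|Tf| = T|f|$ almost everywhere. The equality case in the integral triangle inequality yields, for a.e.\ $x$, a unimodular $\omega(x)$ with $f(x-y) = \omega(x)|f(x-y)|$ for $\sigma$-a.e.\ $y$. Because translates of $\supp\sigma$ cover $\X$ with substantial overlap, $\omega$ is locally constant where defined, and connectedness of $\X$ promotes it to a global unimodular constant $\omega_0$.

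After replacing $f$ by $\omega_0 f$, assume $g := f \ge 0$ and invoke the Euler--Lagrange equation
\[
    \check\sigma * (\sigma * g)^{q-1} = c\, g^{p-1}
\]
for some $c > 0$. Because $|\widehat{\sigma^{*n}}(\xi)| \lesssim \langle\xi\rangle^{-n\alpha}$, iterated convolutions with $\sigma$ gain arbitrary Sobolev regularity. Combining this with Young's inequality and the $L^p$-improving bounds for $T$ and $\check\sigma$, I bootstrap first to $g \in L^\infty$ and then, once $g$ is known to be strictly positive, to $g \in C^\infty_{loc}$. Pointwise positivity follows from support propagation (iterated convolutions $\sigma^{*n} * g$ have support exhausting $\X$ for $n$ large) together with the Euler--Lagrange equation. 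Continuity then delivers the uniform positive lower bound on every compact set. The main obstacle is this bootstrap: extracting $g$ from $g^{p-1}$ passes through the nonsmooth map $t \mapsto t^{1/(p-1)}$ at $t = 0$, so one must carefully sequence the upgrades to integrability, positivity, and regularity to avoid circular dependencies.
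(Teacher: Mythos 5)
Your existence argument takes a genuinely different route from the paper's: you propose a refined inequality $\|Tf\|_q \lesssim \|f\|_p^{1-\theta}\|f\|_U^\theta$ plus a Brezis--Lieb/superadditivity argument, whereas the paper never proves (and never needs) such a refinement. Instead it exploits the compact support of $\sigma$ directly: a profile decomposition (Lemma~\ref{L: profile decomposition}) shows that any quasi-extremizer places an $\epsilon$-dependent chunk of its $L^p$ mass in a translate of a fixed ball, and a second lemma (Lemma~\ref{L: spatial localization}) upgrades this to full localization of near-extremizers on a single ball of uniform radius modulo translation; compactness then follows because the Fourier decay makes $T$ map $L^p \to W^{\kappa,q}$, so Rellich--Kondrachov gives strong $L^q_{loc}$ convergence of $Tf_n$. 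This is also exactly what your Brezis--Lieb step is silently presuming: to write $\|Tf\|_q^q + \|Tr_n\|_q^q \to \|T\|_{p,q}^q$ you need a.e.\ (or strong) convergence of $Tf_n$ to $Tf$, which weak $L^p$ convergence of $f_n$ does not deliver on its own. The refined estimate you invoke to rule out vanishing is also not established for this class of operators, and it is not clear how to build it solely from Fourier decay at the claimed level of generality. The paper's compact-support localization neatly sidesteps both problems.

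The constant-phase step in your proposal has a real gap. From $|Tf|=T|f|$ a.e.\ you correctly get that, for a.e.\ $x$, $\arg f$ is $\sigma$-essentially constant on $x - \supp\sigma$; but the jump from that to ``$\omega$ is locally constant and connectedness promotes it to a global constant'' is unjustified when $\sigma$ is singular (say, surface measure on a sphere). Two translated copies of $\supp\sigma$ can overlap only on a $\sigma$-null set, so the naive propagation argument does not run. Lemma~\ref{L: positivity} avoids this entirely: it partitions $S^1$ into $n$ dyadic arcs, shows via the interpolation-type inequality (same superadditivity you use for precompactness, but applied to the arcs) that all of $\|f\|_p$ must sit on a single arc $I_n$, and then takes the nested intersection $\cap_n I_n$ to produce $\omega_0$.

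For the quantitative positivity and the regularity bootstrap you correctly identify the central obstacle (the map $t\mapsto t^{1/(p-1)}$ is not smooth at $0$), but you stop short of the resolution. The paper's ordering is: (i) first prove $\inf_W f > 0$ on compacts, using the Euler--Lagrange equation combined with the Jensen-type pointwise bound $T(f^a)\ge (Tf)^a$ (a consequence of $T\Ind=\Ind$) and the fact that $(\sigma*\widetilde\sigma)^{*N}$ is continuous and $\ge c_R$ on $B(0,R)$ for $N$ large; (ii) then upgrade to $f\in L^\infty$ via a convexity argument showing $f\in\cap_{s<\infty}L^s$ plus Young's inequality with $(\widetilde\sigma*\sigma)^{*N}\in L^{\min\{2,p'\}}$; (iii) only then run the Kato--Ponce bootstrap, where Lemma~\ref{L: power estimate} smooths $t\mapsto t^a$ on the compact interval $[\inf_W f,\ \|f\|_\infty]$, which is precisely what steps (i)--(ii) make possible. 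Your ``support propagation'' heuristic for positivity is on the right track, but without the $T(f^a)\ge (Tf)^a$ inequality the iterated Euler--Lagrange equation does not close into a pointwise lower bound, because the powers $q-1$ and $1/(p-1)$ do not pass through $T$ in the needed direction.
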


Although the focus of this article is convolution operators, Theorem \ref{T: existence} bears the closest resemblance to results for adjoint Fourier extension operators associated with compact submanifolds.
The existence of extremizers for these operators was proven by Fanelli, Vega, and Visciglia for any compact submanifold and valid, non-endpoint pair of exponents $(2, q)$ such that $q \geq 2$ (\cite{FanelliEtAl2011}).
Flock and Stovall have also proven existence for adjoint Fourier restriction to the sphere for all valid, non-endpoint exponents $(p, q)$ such that $p < q$ (\cite{FlockStovall2022}).
Neither of these results address the properties of these extremizers.

A prototypical example of an $L^p$-improving operator is convolution with a measure $\sigma$ supported on a submanifold, which we call a generalized Radon transform. 
The study of $L^p$-improvement for generalized Radon transforms is extensive, involving techniques from combinatorics, sub-Riemannian geometry, and even ideas from algebraic geometry in the form of geometric invariant theory. 
We refer to Gressman for some of the furthest progress to date (\cite{Gressman2021}). 
At least in the case of convolution with surface measure on a compact hypersurface with positive Gaussian curvature, the endpoint $L^\frac{d+1}{d} \rightarrow L^{d+1}$ bound is known by the complex interpolation method of Littman (\cite{Littman1973}). 
In addition to their intrinsic interest as fundamental operators in harmonic analysis, such generalized Radon transforms have found applications in PDE, particularly in the study of propagation of singularities.

The existence of extremizers for generalized Radon transforms has been explored in several special cases---see the investigations of affine arclength measure on the moment curve by Biswas (\cite{Biswas2019}) and affine surface measure on the paraboloid by Christ (\cite{Christ2011}). 
These proofs of existence rely on the scaling symmetry of the operator to rule out extremizing sequences that converge weakly to zero either by ``spreading to infinity,'' or ``going up the spout.'' 
In our setting of compactly supported operators, we lose all scaling symmetries, but gain extra estimates at $L^1$ and $L^\infty$, allowing us to leverage interpolation to rule out the bad cases anyway. 
In \cite{Biswas2019} and \cite{Christ2011}, the convergence of extremizing sequences is deduced from properties of the \textit{quasi-extremizers}, or $L^p$-normalized functions $f$ for which $\|Tf\|_q \gtrsim \|f\|_p$ (\cite{Christ2011a}, \cite{Stovall2009}). 
Lemma \ref{L: profile decomposition} can be considered a rough replacement for that analysis, restricting quasi-extremizers to a small number of compact sets that differ by translation, in comparison with the many parameter paraballs in \cite{Biswas2019}, \cite{Christ2011a}, and \cite{Stovall2009}. 
In the case of surface measure on the sphere, quasi-extremals were characterized by Stovall (\cite{Stovall2009}), but, in comparison with the paraboloid or the moment curve, the lack of a product structure complicates the analysis of extremals used in \cite{Biswas2019} or \cite{Christ2011}.

Once existence is established, it is natural to ask about the identity of these extremizers.
In \cite{Christ2014}, Christ gave an explicit formula for the extremizers of convolution with the paraboloid.
These results also apply to the classical Radon transform given by integration over all hyperplanes passing through the origin. 
Flock extended the unique characterization to general $k$-plane transforms (\cite{Flock2016}).
Unfortunately, uniqueness or explicit formulas are too much to hope for in most situations, particularly at the level of generality of Theorem \ref{T: existence}.
For convolution with surface measure on the paraboloid with $(\tfrac{1}{p}, \tfrac{1}{q}) = (\tfrac{d}{d+1}, \tfrac{1}{d+1})$, Christ and Xue used a flexible inductive scheme to show that critical points of the functional $f \mapsto \|Tf\|_q/\|f\|_p$ (including but not exclusively extremizers) are smooth, with some extra decay (\cite{ChristXue2012}).

\subsection{Notation}

For a space of real measureable functions $X(\X)$, let
\[
	X_+(\X) = \{\phi \in X(\X) : \phi \geq 0\}.
\]

Let $C(\X)$ be the space of continuous functions on $\X$.

For all $\ell \geq 0$, let
\[
	C^\ell(\X) = \left\{ \phi \in C(\X) : \|\phi\|_{C^\ell(\X)} =
	\|\phi\|_{C^0(\X)} + \|\partial^\ell\phi\|_{C^0(\X)} < \infty\right\}
\]
and
\[
	C^\ell_{loc}(\X) = \left\{ \phi \in C(\X) : [\phi]_{\ell, K} = \|\phi\|_{C^0(K)}
    + \|\partial^\ell\phi\|_{C^0(K)} < \infty \;\forall\; K \subset \X \text{ compact}\right\}.
\]
Let
\[
	C^\infty(\X) = \bigcap_{\ell \geq 0} C^\ell(\X) \quad\text{and}\quad
	C^\infty_{loc}(\X) = \bigcap_{\ell \geq 0} C^\ell_{loc}(\X)
\]
with the induced seminorms. Let $\mathcal{D}(\X) = C^\infty_{cpct}(\X)$ be the subset of
$C^\infty(\X)$ with compact support and the same seminorms.

For all $s \geq 0$ and $1 < p < \infty$, let $W^{s,p}(\X)$ be the standard inhomogeneous Bessel
potential space (\cite[Section 2.1.2]{RunstSickel1996}).

The implicit constants contained in $\lesssim$ are allowed to depend upon $p$, $q$, $d$, and $\sigma$ unless otherwise noted.

\subsection{Acknowledgements}
The author would like to thank his advisor, Betsy Stovall, for all of her mathematical and practical
suggestions. This work was partially supported by NSF grants DMS-2246906 and DMS-2037851.

\section{Quasi-extremizer localization}

In this section, we prove that near-extremizers are concentrated on balls of uniform size when $\X = \R^d$. 
Let $c_d = \sqrt{d}$ so that there exists $0 < C < \infty$ such that
\[
    C^{-1} \leq \sum_{\alpha \in \Z^d} \Ind_{B(\alpha, c_d)}(x) \leq C
\]
for all $x \in \R^d$.
Without loss of generality, we may conjugate $T$ with a scaling of $\R^d$ and compose with a translation to reduce to the case $\supp \sigma \subset B(0,c_d)$.

\begin{lemma}\label{L: profile decomposition}
	Assume that $1 < p < q < \infty$. For all $\epsilon > 0$ sufficiently small, there exists a natural number $N \simeq \epsilon^{-\frac{pq}{q-p}}$ such
	that, for all $f \in L^p(\R^d)$ with $\|Tf\|_q \gtrsim \|T\|_{p,q}\|f\|_p$ and $\|f\|_p = 1$, there
	exists a sequence of non-negative functions $\phi_1, \dots, \phi_N$, a sequence
	$x_1, \dots, x_N \in \R^d$, and a remainder $r_N \in L^p(\R^d)$ with the following
	properties:
	\begin{enumerate}
		\item $\supp \phi_j \subset B(0,2c_d)$ for all $1 \leq j \leq N$;
		\item $\phi_j = \Ind_{B(0,2c_d)}\tau^{x_j}(f - \sum_{k=1}^{j-1} \tau^{-x_k}\phi_k)$ for all $1 \leq j \leq N$;
		\item $f = \sum_{j=1}^N \tau^{-x_j}\phi_j + r_N$; and
		\item $\|Tr_N\|_q < \epsilon$.
	\end{enumerate}
\end{lemma}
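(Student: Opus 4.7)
The plan is an iterative extraction whose driving mechanism is a local concentration estimate coming from the near-local character of $T$. Since $\supp\sigma \subset B(0, c_d)$, for $x \in B(\alpha, c_d)$ and $y \in \supp\sigma$ one has $x - y \in B(\alpha, 2c_d)$, so $Tf$ on $B(\alpha, c_d)$ depends only on $f|_{B(\alpha, 2c_d)}$. Setting $g_\alpha = \Ind_{B(\alpha, 2c_d)}f$ and using bounded overlap of the covers $\{B(\alpha, c_d)\}_{\alpha\in\Z^d}$ and $\{B(\alpha, 2c_d)\}_{\alpha\in\Z^d}$ together with the $L^p\to L^q$ bound applied to each local piece,
\[
    \|Tf\|_q^q \lesssim \sum_\alpha \|Tg_\alpha\|_q^q \leq \|T\|_{p,q}^q \sum_\alpha \|g_\alpha\|_p^q.
\]
Since $q > p$, interpolate this $\ell^q$ sum against the $\ell^p$ sum:
\[
    \sum_\alpha \|g_\alpha\|_p^q \leq \bigl(\sup_\alpha \|g_\alpha\|_p\bigr)^{q-p} \sum_\alpha \|g_\alpha\|_p^p \lesssim \bigl(\sup_\alpha \|g_\alpha\|_p\bigr)^{q-p} \|f\|_p^p,
\]
where the last step again uses bounded overlap. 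Rearranging, whenever $\|Tf\|_q \geq \delta \|T\|_{p,q}\|f\|_p$ one has $\sup_\alpha \|g_\alpha\|_p \gtrsim \delta^{q/(q-p)} \|f\|_p$, so some ball $B(y, 2c_d)$ carries a definite proportion of the $L^p$ mass.

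With this concentration estimate in hand, the extraction is straightforward. Apply it to $f$ (with $\delta \gtrsim 1$ by the quasi-extremality hypothesis) to locate $x_1$ with $\|\phi_1\|_p \gtrsim 1$ where $\phi_1 = \Ind_{B(0, 2c_d)}\tau^{x_1}f$, then set $r_1 = f - \tau^{-x_1}\phi_1$ and iterate. By construction, $r_{k+1}$ equals $r_k$ off the extracted ball and vanishes on it, so the translates $\tau^{-x_j}\phi_j$ are restrictions of $f$ to pairwise disjoint sets and
\[
    \|f\|_p^p = \|r_k\|_p^p + \sum_{j=1}^k \|\phi_j\|_p^p = 1
\]
at every stage. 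In particular $\|r_k\|_p \leq 1$, so as long as $\|Tr_k\|_q \geq \epsilon$ the concentration estimate applied to $r_k$ yields
\[
    \|\phi_{k+1}\|_p \gtrsim \epsilon^{q/(q-p)}\|T\|_{p,q}^{-q/(q-p)} \gtrsim \epsilon^{q/(q-p)},
\]
absorbing $\|T\|_{p,q}$ into the implicit constant.

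Matching this per-step lower bound against the conservation identity forces
\[
    N \cdot \epsilon^{pq/(q-p)} \lesssim \sum_{j=1}^N \|\phi_j\|_p^p \leq 1,
\]
so $N \lesssim \epsilon^{-pq/(q-p)}$. If the process terminates strictly before this bound, pad the sequence with zero $\phi_j$'s (which trivially satisfy all four properties) to reach exactly $N$ terms.

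The main obstacle is the opening step: recognizing that combining the global $L^p \to L^q$ bound with a discrete $\ell^q$ triangle inequality enforced by $\supp\sigma \subset B(0,c_d)$ pigeonholes the $L^p$-mass into a single compact ball, quantitatively controlled by $\|Tf\|_q/(\|T\|_{p,q}\|f\|_p)$. Once this pigeonholing is in place, the rest is bookkeeping, made possible by the $L^p$-orthogonality of the extracted pieces, which arises precisely because we zero out $r_k$ on the chosen ball at each step.
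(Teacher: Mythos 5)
Your argument is correct and matches the paper's essentially step for step: localize $T$ via $\supp\sigma\subset B(0,c_d)$ so that $Tf|_{B(\alpha,c_d)}$ depends only on $f\Ind_{B(\alpha,2c_d)}$, interpolate the resulting $\ell^q$ sum against the $\ell^p$ sum to extract a ball carrying $\gtrsim\epsilon^{pq/(q-p)}$ of the $L^p$ mass, and iterate on the remainder. Your termination bookkeeping via the exact conservation identity $\|f\|_p^p=\|r_k\|_p^p+\sum_j\|\phi_j\|_p^p$ (which holds because the extracted pieces are restrictions of $f$ to pairwise disjoint sets) is in fact a cleaner way of expressing what the paper phrases as a telescoping multiplicative decay, and leads to the same $N\lesssim\epsilon^{-pq/(q-p)}$.
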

\begin{proof}
	Let $f \in L^p(\R^d)$ be any non-zero function such that $\|f\|_p \leq 1$ and $\|Tf\|_q > \epsilon$. 
    By the compact support of $\sigma$ and the fact that the collection $\{B(\alpha, 2c_d) : \alpha \in \Z^d\}$ covers each point in $\R^d$ at most $\lesssim 1$-fold,
	\begin{multline*}
		\epsilon^q < \int |Tf|^q \leq \sum_{\alpha \in \Z^d} \int_{B(\alpha, c_d)} |Tf|^q
		= \sum_{\alpha \in \Z^d} \int_{B(\alpha, c_d)} |T(f\Ind_{B(\alpha, 2c_d)})|^q 
		\leq \|T\|_{p,q}^q \sum_{\alpha \in \Z^d} \|f\Ind_{B(\alpha, 2c_d)}\|_p^q \\
		\leq \|T\|_{p,q}^q \sup_{\alpha \in \Z^d} \|f\Ind_{B(\alpha, 2c_d)}\|_p^{q-p} \sum_{\alpha \in \Z^d} \|f\Ind_{B(\alpha, 2c_d)}\|_p^p 
		\lesssim \|T\|_{p,q}^q \|f\|_p^p \sup_{\alpha \in \Z^d} \|f\Ind_{B(\alpha, 2c_d)}\|_p^{q-p} 
		\lesssim \sup_{\alpha \in \Z^d} \|f\Ind_{B(\alpha,2c_d)}\|_p^{q-p}.
	\end{multline*}
	Therefore, there exists $x_1 \in \Z^d$ such that
	\begin{equation}\label{E: bubble lower bound}
		\|f\Ind_{B(x_1, 2c_d)}\|_p^p \gtrsim \epsilon^\frac{pq}{q-p}.
	\end{equation}
	Let 
	$\phi_1 = \Ind_{B(0, 2c_d)}\tau^{x_1}f$. We can iterate this process inductively on
	the remainders $f - \sum_{k=1}^{j-1} \tau^{-x_k} \phi_k$, letting
	$\phi_j = \Ind_{B(0,2c_d)}\tau^{x_j}(f - \sum_{k=1}^{j-1}\tau^{-x_k} \phi_k)$, as
	long as $\|T(f-\sum_{k=1}^{j-1} \tau^{-x_k} \phi_k)\|_q > \epsilon$, resulting in
	sequences $x_1, \dots, x_j$ and $\phi_1, \dots, \phi_j$. By repeatedly applying \eqref{E:
	bubble lower bound}, there exists $C < \infty$ such that
	\begin{multline*}
		\|T(f - \sum_{k=1}^j \tau^{-x_k}\phi_k)\|_q^p \leq \|T\|_{p,q}^p \|f - \sum_{k=1}^j
		\tau^{-x_k} \phi_k\|_p^p \\
		\leq \|T\|_{p,q}^p (1 - C\epsilon^\frac{pq}{q-p})\|f -
		\sum_{k=1}^{j-1} \tau^{-x_k} \phi_k\|_p^p \leq \|T\|_{p,q}^p
		(1-Cj\epsilon^\frac{pq}{q-p}) \|f\|_p^p,
	\end{multline*}
	so for $\|f\|_p = 1$, the process ends after no more than $N$ steps, where
	$N \simeq \epsilon^{-\frac{pq}{q-p}}$.
\end{proof}

\begin{lemma}\label{L: spatial localization}
	For all sufficiently small $\eta > 0$, there exists $R > 0$ such that, for
	all $f \in L^p(\R^d)$ with $\|Tf\|_q > (1-\eta)\|T\|_{p,q}\|f\|_p$, there exists $x_0 \in \R^d$
	such that
	\[
		\|(\tau^{x_0}f) \Ind_{B(0,R)}\|_p = o_\eta(1) \|f\|_p
	\]
	uniformly in $f$ as $\eta \rightarrow 0$. For a given $f$, we can take the same $x_0$ for
	all $\eta$ for which $f$ satisfies the hypothesis.
\end{lemma}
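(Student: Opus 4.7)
The plan is to invoke Lemma~\ref{L: profile decomposition} with an auxiliary parameter $\epsilon = \epsilon(\eta) \to 0$ as $\eta \to 0$ (say $\epsilon = \eta$), and then run a clustering argument on the bubble centers. After normalizing $\|f\|_p = 1$, one obtains $f = \sum_{j=1}^N \tau^{-x_j}\phi_j + r_N$ with $N \simeq \epsilon^{-pq/(q-p)}$, $\supp \phi_j \subset B(0, 2c_d)$, and $\|Tr_N\|_q < \epsilon$. The iterative construction in Lemma~\ref{L: profile decomposition} gives $\tau^{-x_j}\phi_j = f\,\Ind_{B(x_j, 2c_d)\setminus\bigcup_{k<j} B(x_k, 2c_d)}$, so the translated profiles have pairwise disjoint supports; in particular $\sum_j \|\phi_j\|_p^p + \|r_N\|_p^p = 1$.

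Next, fix a constant $M > 6c_d$ and partition $\{x_1,\ldots,x_N\}$ into clusters $C_1,\ldots,C_s$ via the equivalence relation generated by ``within distance $M$''. Each cluster has diameter at most $NM$, while points in distinct clusters are separated by more than $M$. Setting $\Phi_C := \sum_{j\in C} \tau^{-x_j}\phi_j$, the inclusion $\supp T(\tau^{-x_j}\phi_j) \subset B(x_j, 3c_d)$ and the choice $M > 6c_d$ force the functions $T\Phi_C$ to have pairwise disjoint supports across distinct clusters. Writing $a_C = \|\Phi_C\|_p$, disjointness within each cluster yields $a_C^p = \sum_{j \in C}\|\phi_j\|_p^p$, hence $\sum_C a_C^p \leq 1$.

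Applying the operator bound clusterwise and using the disjoint supports of the $T\Phi_C$,
\[
\Bigl\|T\sum_{j=1}^N\tau^{-x_j}\phi_j\Bigr\|_q^q = \sum_C \|T\Phi_C\|_q^q \leq \|T\|_{p,q}^q \sum_C a_C^q.
\]
The crucial power-sum inequality $\sum_C a_C^q \leq a_*^{q-p}\sum_C a_C^p \leq a_*^{q-p}$, where $a_* = \max_C a_C$, combined with $\|Tf\|_q > (1-\eta)\|T\|_{p,q}$, the triangle inequality, and $\|Tr_N\|_q < \epsilon$, then forces $a_* \geq 1 - O(\eta + \epsilon)$. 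Taking $C_*$ to be the dominant cluster and $x_0$ any bubble center in $C_*$, we have $\supp \Phi_{C_*} \subset B(x_0, NM + 2c_d)$ and
\[
\|f\Ind_{\setcomp{B(x_0, NM + 2c_d)}}\|_p \leq (1 - a_*^p)^{1/p} + \|r_N\|_p = o_\eta(1).
\]
Setting $R = R(\eta) \simeq NM \simeq \eta^{-pq/(q-p)}$ (which diverges as $\eta \to 0$, as is permitted) concludes the main estimate. For the uniformity of $x_0$ across different thresholds: any two candidate centers for the same $f$ must have their near-full-mass balls intersect, so a single $x_0$ can be fixed once and for all at the cost of replacing $R(\eta)$ by a bounded multiple of itself.

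The main obstacle is the cluster-separation step and, more precisely, exploiting the strict inequality $p < q$ through the power-sum bound to isolate a \emph{single} dominant cluster carrying almost all of the $L^p$ mass; the non-endpoint hypothesis is precisely what makes this gain possible. Everything else amounts to bookkeeping using the bounded support of $\sigma$ and of the individual profiles.
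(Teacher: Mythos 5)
Your proof is correct and takes essentially the same route as the paper: decompose via Lemma \ref{L: profile decomposition}, cluster the bubble centers so that $T$ applied to distinct clusters has disjoint support, and use the power-sum inequality together with $p<q$ to isolate a dominant cluster of diameter $\lesssim N$. The one substantive difference is how the dominant cluster is identified: the paper fixes it a priori as the cluster containing $x_1$ (using that $\|\phi_1\|_p\gtrsim 1$, which follows from $\|Tf\|_q\geq(1-\eta)\|T\|_{p,q}$ in the first step of the profile decomposition), which makes $x_0=x_1$ canonical and yields the ``same $x_0$ for all $\eta$'' claim for free, whereas your version needs the additional (correct, but slightly hand-waved) intersection argument at the end.
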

\begin{proof}
	By the homogeneity of the inequality, we assume without loss of generality that $\|f\|_p = 1$.

	Applying Lemma \ref{L: profile decomposition} to $f$ at level $o_\eta(1)$ (a number chosen to be sufficiently small depending on $\eta$), 
    we have $N \in \N$, functions $\phi_j \in L^p(\R^d)$ such that $\supp \phi_j \subset B(0,2c_d)$ and $x_j \in \R^d$ for all $1 \leq j \leq N$, and $r_N \in L^p(\R^d)$ such that
	\[
		f = \sum_{j=1}^N \tau^{-x_j} \phi_j + r_N,
	\]
	as well as the other conclusions. We will replace $o_\eta(1)$ with $o(1)$ for the remainder of the proof. Since
	$\|Tr_N\|_q = o(1)$,
	\begin{equation}\label{E: remainder T small}
		\|T(f - r_N)\|_q = (1-o(1))\|T\|_{p,q}
	\end{equation}
	and hence
	\begin{equation}\label{E: remainder small}
		\|f - r_N\|_p = 1-o(1).
	\end{equation}

	Let $\mathcal{A} \subset \{1, \dots, N\}$ be the indices $j$ such that $x_1$ and $x_j$ belong to the same connected components of $\cup_{j=1}^N B(x_j, 4c_d)$, and let $\mathcal{B} = \{1, \dots, N\} \setminus \mathcal{A}$. 
	We define $A = \sum_{j \in \mathcal{A}}\tau^{-x_j}\phi_j$ and
	$B = \sum_{j \in \mathcal{B}} \tau^{-x_j}\phi_j$. By construction and the definition of
	convolution, $TA$ and $TB$ have disjoint support, so
	\begin{multline*}
		(1-o(1))^q\|T\|_{p,q}^q \leq \|T(f-r_N)\|_q^q = \|TA\|_q^q + \|TB\|_q^q \\
		\leq \|T\|_{p,q}^q \max\{\|A\|_p, \|B\|_p\}^{q-p} \|f - r_N\|_p^p \leq \|T\|_{p,q}^q \max\{\|A\|_p, \|B\|_p\}^{q-p}.
	\end{multline*}
	Since $\|\phi_1\|_p \gtrsim \|f\|_p$ by the proof of Lemma \ref{L: profile decomposition}, for all
	sufficiently small $\eta$ independent of $f$, the maximum is the first term, so
	\[
		\|A\|_p = 1 - o(1)
	\]
	and
	\begin{equation}\label{E: bad bound}
		\|B\|_p = o(1).
	\end{equation}

	By construction, $\supp A \subset B(0,4Nc_d)$, so
	\[
		\|(\tau^{x_1} f)\Ind_{\setcomp{B(0,4Nc_d)}}\|_p = \|(\tau^{x_1} A+
		\tau^{x_1}B+\tau^{x_1}r_N)\Ind_{\setcomp{B(0,4Nc_d)}}\|_p = \|(\tau^{x_1}B +
		\tau^{x_1} r_N)\Ind_{\setcomp{B(0,4Nc_d)}}\|_p = o(1)
	\]
	by \eqref{E: remainder small} and \eqref{E: bad bound}. Set $x_0 = x_1$.
\end{proof}

\section{Extremizers exist}\label{S: existence}

\begin{proof}[Proof of Theorem \ref{T: existence} existence]
	We will treat the case $\X = \T^d$ as a warmup for $\R^d$ later on. 
    Let $f_n \in L^p(\T^d)$ be a normalized, extremizing sequence. 
    By the Banach-Alaoglu theorem, there exists a subsequence in $n$ and $f \in L^p(\T^d)$ such that $f_n \rightharpoonup f$ as $n \rightarrow \infty$.
    The Fourier decay hypothesis implies that $T\colon L^2(\T^d) \rightarrow W^{\alpha, 2}(\T^d)$ is bounded for some $\alpha > 0$.
    Therefore, by interpolating with a bound $T\colon L^r(\T^d) \rightarrow L^s(\T^d)$ in a neighborhood of $(p,q)$, we obtain the estimate $T\colon L^p(\T^d) \rightarrow W^{\kappa, q}(\T^d)$ for some $\kappa > 0$.
    By the Sobolev embedding theorem, there exists a subsequence in $n$ and $F \in L^q(\T^d)$ such that $Tf_n \rightarrow F$ in $L^q(\T^d)$.

	It remains to prove that $\|f\|_p = 1$, as this will imply that $f_n \rightarrow f$ in $L^p(\T^d)$ by uniform convexity. 
    Indeed, since 
    \[
		\langle F, \phi\rangle = \lim_{n \rightarrow \infty} \langle Tf_n, \phi\rangle = \lim_{n \rightarrow \infty} \langle f_n, T^*\phi\rangle = \langle Tf, \phi\rangle
    \]
    for all test functions $\phi$, $F = Tf$. Therefore,
	\[
        \|T\|_{p,q} = \lim_{n \rightarrow \infty} \|Tf_n\|_q = \|F\|_q = \|Tf\|_q \leq \|T\|_{p,q} \|f\|_p.
    \]
	Dividing both sides by $\|T\|_{p,q}$ completes the proof.

	We now proceed to the slightly more difficult case $\X = \R^d$, although, as we will see, Lemma \ref{L: spatial localization} essentially reduces matters to the compact case. 
    As in the prequel, let $f_n \in L^p(\R^d)$ be a normalized, extremizing sequence for $T\colon L^p(\R^d) \rightarrow L^q(\R^d)$. 
    By Lemma \ref{L: spatial localization}, there exists a sequence $x_n$ such that
	\begin{equation}\label{E: existence 3}
		\lim_{R \rightarrow \infty} \limsup_{n \rightarrow \infty} \|\tau^{x_n} Tf_n - T(\tau^{x_n} f_n \Ind_{B(0,R)})\|_q = 0.
	\end{equation}
	For the sake of brevity, let
	\[
		f_n^R = \tau^{x_n} f_n \Ind_{B(0,R)}.
	\]
    By the Banach-Alaoglu theorem, there exists $f \in L^p(\R^d)$ such that $\tau^{x_n} f_n \rightharpoonup f$ after passing to a subsequence in $n$.
    As was the case for $\X = \T^d$ above, interpolation, Sobolev embedding and the uniformly bounded support of $Tf_n^R$ for fixed $R$ implies the existence of $F^R \in L^q(\R^d)$ and subsequences in $n$ along which $Tf_n^R \rightarrow F^R$ in $L^q(\R^d)$ for all $R$. 
    By diagonalization, we may take a single subsequence in $n$ that satisfies the claim for all $R$.

    Next, we prove that the sequence $F^R$ is Cauchy. Indeed, for all $R, R' \in \N$, there exists $n$ sufficiently large that
	\[
		\|F^R - F^{R'}\|_q \leq \|F^R - Tf_n^R\|_q + \|Tf_n^R - Tf_n^{R'}\|_q + \|F^{R'} - Tf_n^{R'}\|_q = o_n(1) + o_{\min\{R,R'\}}(1)
	\]
	by Lemma \eqref{L: spatial localization}. 
    Taking $n \rightarrow \infty$, we can see that the sequence $F^R$ is Cauchy, and therefore there exists $F \in L^q(\R^d)$ such that $F^R \rightarrow F$. 
    The key is that the conclusions of Lemma \ref{L: spatial localization} are uniform for sufficiently large $n$, so taking larger and larger $n$ doesn't alter the $o_{\min\{R,R'\}}(1)$ term.

    Let $\epsilon > 0$ and let $\phi \in C^\infty_{cpct}(\R^d)$ be such that $\|\phi\|_{q'} = 1$ and $\langle F, \phi\rangle > (1-\epsilon)\|F\|_q = (1-\epsilon)\|T\|_{p,q}$.
    Then by \eqref{E: existence 3},
    \begin{multline*}
        (1-\epsilon)\|T\|_{p,q} \leq \langle F, \phi\rangle = \lim_{R \rightarrow \infty} \langle F^R, \phi\rangle = \lim_{R \rightarrow \infty} \lim_{n \rightarrow \infty} \langle Tf_n^R, \phi\rangle = \lim_{n \rightarrow \infty} \langle \tau^{x_n} Tf_n, \phi\rangle \\ 
        = \lim_{n \rightarrow \infty} \langle \tau^{x_n} f_n, T^* \phi\rangle = \langle f, T^*\phi\rangle \leq \|T\|_{p,q} \|f\|_p \|\phi\|_{q'}.
    \end{multline*}
    Dividing by $\|T\|_{p,q}$ and letting $\epsilon$ tend to zero, $f_n \rightarrow f$ by convexity.
\end{proof}

\section{Extremizers have constant argument}\label{S: positivity}

We return to the general setup: $\X \in \{\R^d, \T^d\}$.

\begin{lemma}\label{L: positivity}
	For all extremizers $f$ there exists a unimodular constant $\omega_0 \in \C$ such that $\omega_0 f \geq 0$ almost everywhere.
\end{lemma}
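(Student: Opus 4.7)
The starting point is the pointwise bound $|Tf(x)| \leq T|f|(x)$, which follows from the triangle inequality for the $\sigma$-integral defining $Tf$ together with the non-negativity of $\sigma$. Taking $L^q$ norms and using extremality of $f$ forces the chain
\[
\|T\|_{p,q}\|f\|_p = \|Tf\|_q \leq \|T|f|\|_q \leq \|T\|_{p,q}\|f\|_p
\]
to consist of equalities, so $|f|$ is itself an extremizer and $|Tf(x)| = T|f|(x)$ for a.e. $x$. The equality case of the complex triangle inequality then says that for a.e.\ $x$ with $T|f|(x)>0$ there is a unimodular $\omega_0(x)\in\C$ with $f(x-y)=\omega_0(x)|f(x-y)|$ for $\sigma$-a.e.\ $y$. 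The goal is to show $\omega_0$ may be chosen independent of $x$, equivalently that the pointwise phase $\omega:=f/|f|$ is essentially constant on $E:=\{f\neq 0\}$.

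Reparameterizing with $z=x-y_1$, $h=y_2-y_1$ and invoking Fubini translates the above into a translation-invariance statement: writing $\check\sigma$ for the reflection $\check\sigma(A)=\sigma(-A)$, for $(\sigma*\check\sigma)$-a.e.\ $h\in\R^d$,
\begin{equation}\label{eq:phase-inv}
f(z)\overline{f(z-h)} = |f(z)||f(z-h)| \quad \text{for a.e. } z.
\end{equation}
Let $S\subset\R^d$ denote the set of $h$ satisfying \eqref{eq:phase-inv}. The left-hand side depends continuously on $h$ in $L^{p/2}_{\mathrm{loc}}$ (by H\"older and the strong continuity of translations in $L^p_{\mathrm{loc}}$), so $S$ is closed, hence $S\supset\supp(\sigma*\check\sigma)$. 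A straightforward Fubini argument---changing variables $z'=z-h$ and then swapping the order of integration---shows that if $S=\R^d$, then $\omega$ is essentially constant on $E$, producing the desired $\omega_0$.

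To force $S=\R^d$, the Fourier decay enters as follows. For $k>d/(2\alpha)$ we have $|\widehat{\sigma^{*k}}(\xi)|^2 = |\widehat{\sigma}(\xi)|^{2k} \in L^1(\R^d)$, so $\sigma^{*k}$ is an $L^2$ function and $\sigma^{*k}*\check\sigma^{*k}$ is continuous with value $\|\sigma^{*k}\|_2^2>0$ at the origin; hence $\supp(\sigma^{*k}*\check\sigma^{*k}) = \supp\sigma^{*k}-\supp\sigma^{*k}$ contains a neighborhood of $0$. Running the derivation of \eqref{eq:phase-inv} with $\sigma$ replaced by $\sigma^{*k}$---which requires the iterated equality $|\sigma^{*k}*f|=\sigma^{*k}*|f|$ a.e.---places each such Minkowski difference inside $S$. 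As $k\to\infty$ these sets grow to cover $\R^d$, so $S=\R^d$ by closedness. The same argument applies on $\T^d$, with $\T^d$ itself covered by a finite Minkowski iterate thanks to compactness.

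The main obstacle is the iterated equality $|\sigma^{*k}*f|=\sigma^{*k}*|f|$, proved inductively on $k$. Writing $\sigma^{*j}*f=\omega_{j-1}\cdot\sigma^{*j}*|f|$ on the set where $\sigma^{*j}*|f|>0$, the inductive step reduces to showing that the phase $\omega_{j-1}$ is $\sigma$-essentially constant along translates of $\supp\sigma$. For $j=1$ this is read off by comparing the Euler--Lagrange equations for $f$ and $|f|$ (they share a Lagrange multiplier, both being extremizers at the same $L^p$-norm); higher iterates follow by a parallel argument with $\sigma^{*j}*f$ in place of $Tf$. This bootstrap---rather than the triangle-inequality argument of the first paragraph---is where the finer extremal structure of $f$ is genuinely used.
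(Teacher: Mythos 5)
Your opening paragraph matches the paper's starting point (equality in $|Tf| \leq T|f|$ forces alignment of phases along $\sigma$-translates), but after that the two proofs diverge sharply, and your route has a real gap.

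\textbf{The gap.} Everything hinges on the iterated equality $|\sigma^{*k} * f| = \sigma^{*k} * |f|$, which you correctly flag as ``the main obstacle,'' but your proposed fix does not close it. First, the comparison of Euler--Lagrange equations for $f$ and $|f|$ does not yield new information beyond what $|Tf|=T|f|$ already gives: taking the modulus of the complex EL identity $|f|^{p-2}\overline f = C\,T^*\bigl(|Tf|^{q-2}\overline{Tf}\bigr)$ and comparing with $|f|^{p-1} = C\,T^*\bigl(|Tf|^{q-1}\bigr)$ produces exactly the statement that, for a.e.\ $x$ with $f(x)\neq 0$ and $\sigma$-a.e.\ $y$ with $|Tf(x+y)|>0$, one has $\omega^{(1)}(x+y)=\omega(x)$ --- and a Fubini change of variables shows this is the same relation (A) you already extracted from $|Tf|=T|f|$, not a ``finer'' one. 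Second, the inductive step for $j\geq 2$ cannot be ``a parallel argument with $\sigma^{*j}*f$ in place of $Tf$'' because $\sigma^{*j}*f$ is not a critical point of any variational problem for which an EL identity is available. The underlying difficulty is that the good set $S$ of shifts is not closed under addition: knowing $\arg f(z)=\arg f(z-h_1)$ and $\arg f(z)=\arg f(z-h_2)$ (wherever both sides are nonzero) does not give $\arg f(z)=\arg f(z-h_1-h_2)$ when $f(z-h_1)$ or $f(z-h_2)$ vanishes, so the chain of equalities can break on a set of positive measure. For a toy example with $\sigma=\tfrac12(\delta_0+\delta_1)$ on $\R$, $|Tf|=T|f|$ forces $f(x)$ and $f(x-1)$ to share a phase wherever both are nonzero, but says nothing directly about $f(x)$ versus $f(x-2)$ when $f(x-1)=0$; $|T^2f|=T^2|f|$ is genuinely stronger. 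Without a new idea, the spreading step $\supp(\sigma^{*k}*\check\sigma^{*k}) \subset S$ is unsupported for $k\geq 2$.

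\textbf{Comparison with the paper.} The paper avoids the spreading/iteration issue entirely. It stays with the single equality $|Tf|=T|f|$, which shows that for a.e.\ $x$ there is a single phase $\omega_x$ carried by $f$ on $x-\supp\sigma$, and then runs a pigeonhole argument: partition $S^1$ into $n$ arcs, form the level sets $E_{n,j}=\{\arg f\in \text{arc }j\}$ and $F_{n,j}=\{\omega_x\in \text{arc }j\}$, observe $Tf = T(f\Ind_{E_{n,j}})$ on $F_{n,j}$, and then the chain
\[
\|T\|_{p,q}^q = \sum_j \int_{F_{n,j}} |T(f\Ind_{E_{n,j}})|^q \leq \|T\|_{p,q}^q \max_j \|f\Ind_{E_{n,j}}\|_p^{q-p}
\]
combined with $p<q$ and $\sum_j\|f\Ind_{E_{n,j}}\|_p^p=1$ forces all of the $L^p$ mass onto a single arc $E_{n,j_n}$; letting $n\to\infty$ pins down $\arg f$ to a single point. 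This argument uses $p<q$ but makes no use of the Fourier-decay hypothesis on $\sigma$, which your argument relies on crucially. If you can repair the iterated-equality step, you would have a structurally different proof, but as written the paper's approach is both correct and more elementary at this point in the development.
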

\begin{proof}
	We identify $\C$ and $\R^2$ and let $\cdot$ be the vector dot product. 
    For every $x \in \X$ for which $T|f|(x)$ is defined and non-zero, define a measure on every Borel set $E \subset S^1 \subset \R^2$ by
	\[
		\mu_x(E) = T|\Ind_{\arg f \in E} f|(x)
	\]
	By definition,
	\[
		\mu_x(S^1) = T|f|(x) \quad \text{and} \quad \int_{S^1} \omega \; d\mu_x(\omega) = Tf(x).
	\]

	By the triangle inequality and Cauchy-Schwarz,
	\begin{multline}\label{E: existence proof 1}
		|Tf(x)|^2 = \left|\left(\int_{S^1} \omega \;d\mu_x(\omega)\right) \cdot \left(\int_{S^1} \nu \;d\mu_x(\nu)\right)\right| = \left|\int_{S^1 \times S^1} \omega \cdot \nu \; (d\mu_x \times d\mu_x)(\omega, \nu)\right| \\
		\leq \int_{S^1 \times S^1} |\omega\cdot \nu| \; (d\mu_x \times d\mu_x)(\omega, \nu) \leq \int_{S^1 \times S^1} |\omega| |\nu| \; (d\mu_x \times d\mu_x)(\omega, \nu) = (T|f|(x))^2.
	\end{multline}
	Recall that we assumed that $T|f|(x) > 0$ at the beginning of the proof, so we may assume that $\mu_x \not\equiv 0$. 
    Since $f$ is an extremizer and $|Tf(x)| \leq T|f|(x)$ by the triangle inequlaity, $|Tf(x)| = T|f|(x)$ for almost every $x \in \X$. 
    Otherwise, we would have $\|Tf\|_q < \|T|f|\|_q \leq \|T\|_{p,q}\|f\|_p$.
    For equality to hold for the Cauchy-Schwarz inequality in \eqref{E: existence proof 1}, we must have $\omega = \pm \nu$ for $\mu_x \times \mu_x$-almost every $(\omega, \nu)$. 
    In other words, $\supp \mu_x \times \mu_x \subset \{(\omega, \nu) : \omega = \pm \nu\}$. 
    Taking any $\omega_0 \in \supp \mu_x$, this means that $\supp \mu_x \subset \{\pm \omega_0\}$. 
    Equality in	the application of the triangle inequality in \eqref{E: existence proof 1} implies that either $\mu_x(\{\omega_0\}) = 0$ or $\mu_x(\{-\omega_0\}) = 0$. 
    Let $\omega_x$ be the point with positive measure. 
    By the definition of $\mu_x$, $\omega_x = \arg Tf(x)$.

	It remains to show that $\omega_x$ is constant in $x$.

	For $n \geq 2$ and $0 \leq j \leq n-1$, let
	\[
        E_{n,j} = \{ x \in \X : f(x) \neq 0 \text{ and } \arg f(x) \in [j \frac{2\pi}{n}, (j+1)\frac{2\pi}{n})\}
	\]
	and
	\[
        F_{n,j} = \{ x \in \X : T|f|(x) > 0 \text{ and } \omega_x \in [j \frac{2\pi}{n}, (j+1)\frac{2\pi}{n})\}.
	\]
	By the definition of $\mu_x$ and the preceeding argument, $Tf(x) = T(f \Ind_{E_{n,j}})(x)$ for
	almost every $x \in F_{n,j}$. Therefore, assuming without loss of generality that $\|f\|_p = 1$,
	\begin{multline*}
		\|T\|_{p,q}^q = \int |Tf|^q = \sum_{j=0}^{n-1} \int_{F_{n,j}} |Tf|^q = \sum_{j=0}^{n-1} \int_{F_{n,j}} |T(f\Ind_{E_{n,j}})|^q \leq \|T\|_{p,q}^q \sum_{j=0}^{n-1} \|f\Ind_{E_{n,j}}\|_p^q \\
		\leq \|T\|_{p,q}^q \max_{0 \leq j \leq n-1} \|f \Ind_{E_{n,j}}\|_p^{q-p} \sum_{j=0}^{n-1} \|f\Ind_{E_{n,j}}\|_p^p =\|T\|_{p,q}^q \max_{0 \leq j \leq n-1} \|f \Ind_{E_{n,j}}\|_p^{q-p}.
	\end{multline*}
	This implies that for every $n \geq 2$, there exists $0 \leq j_n \leq n-1$ such that $\|f \Ind_{E_{n,j_n}}\|_p = 1$, so $\supp f \subset E_{n,j_n}$. 
    Let $I_n = [j_n \frac{2\pi}{n}, (j_n + 1)\frac{2\pi}{n}]$. Since $I_{n+1} \subset I_n$ and $|I_n| \rightarrow 0$, the Heine-Borel theorem says that $\cap_n I_n = \{\omega_0\}$ for some $\omega_0 \in S^1$. 
    Therefore, $\omega_x = \omega_0$ is constant in $x$, and $\overline{\omega_0}f \geq 0$ almost everywhere.
\end{proof}

In light of Lemma \ref{L: positivity}, we may always assume that an extremizer $f$ is
non-negative.

All extremizers satisfy a recurrance relation called the Euler-Lagrange equation. Since $f$ is a
critical point of the functional
\[
	f \mapsto \frac{\|Tf\|_q}{\|f\|_p},
\]
we see that
\begin{equation*}
	\frac{d}{d\epsilon}{\Big|}_{\epsilon = 0} \frac{\|T(f + \epsilon g)\|_q}{\|f + \epsilon g\|_p} = 0
\end{equation*}
for all $g \in L^p(\X)$. Since
\[
	\frac{d}{d\epsilon}{\Big|}_{\epsilon = 0} \|T(f+\epsilon g)\|_q = \|Tf\|_q^{1-q} \int (Tf)^{q-1}
	\, Tg
\]
and
\[
	\frac{d}{d\epsilon}{\Big|}_{\epsilon = 0} \|f+\epsilon g\|_p = \|f\|_p^{1-p} \int f^{p-1}\, g,
\]
the quotient rule implies that
\[
	\|f\|_p \|Tf\|_q^{1-q} \int (Tf)^{q-1} \, Tg = \|Tf\|_q \|f\|_p^{1-p} \int f^{p-1}\, g.
\]
Rearranging,
\[
	\int g\, \left(\|f\|_p^p\, T^*(Tf)^{q-1} - \|Tf\|_q^q\, f^{p-1}\right) = 0.
\]
Letting $g$ range over all of $L^p(\X)$,
\begin{equation}
	\label{E: E-L}
	f = \|T\|_{p,q}^{-\frac{q}{p-1}} \left( T^* (T f)^{q-1}\right)^\frac{1}{p-1}.
\end{equation}
for all normalized, non-negative extremizers $f$.

\begin{proof}[Proof of Theorem \ref{T: existence} positivity]
    By Theorem \ref{T: existence} and Lemma \ref{L: positivity}, we may assume that $f$ is a normalized, non-negative extremizer.

    It suffices to assume that $q \geq 2$. 
    Indeed, if $p < q < 2$, then $(Tf)^{q-1}$ is an extremizer for $T^*\colon L^{q'}(\X) \rightarrow L^{p'}(\X)$ since $\|T^*\|_{q', p'} = \|T\|_{p,q}$.
    If we prove that
    \[
        \inf_{x \in B(0,R)} g > 0
    \]
    for all non-negative extremizers $g$ for $T^*\colon L^{q'}(\X) \rightarrow L^{p'}(\X)$ and $R > 0$, then the same holds for $f$ by \eqref{E: E-L}.
    Therefore, we assume that $q \geq 2$.

	Let $a > 1$. For all $x \in \X$ such that $Tf(x)$ is
	defined and all $t \geq 0$,
	\[
		t^a \geq Tf(x)^a + aTf(x)^{a-1}(t - Tf(x)).
	\]
	Setting $t = f(\cdot)$ and applying $T$, by the fact that $T\Ind = \Ind$,
	\begin{multline}\label{E: Jensen}
		T(f^a)(x) \geq T\left(Tf(x)^a + aTf(x)^{a-1}(f(\cdot) - Tf(x))\right) \\
		= Tf(x)^a \cdot T\Ind + aTf(x)^{a-1}(Tf(x) - Tf(x) \cdot T\Ind) = Tf(x)^a.
	\end{multline}

    Since $\sigma$ has positive Fourier decay, there exists $N \in \N$ such that $K = (\sigma * \widetilde\sigma)^{*N} \in C^0(\X)$, where the superscript $\cdot^{*N}$ represents the $N$th convolution power. 
    Furthermore, since $\sigma$ is a probability measure, $K(0) = 1$, and therefore there exists $r > 0$ such that $K(x) > \tfrac{1}{2}$ for all $x \in B(0,r)$. 
    This means that, for all $R > 0$, there exist $N \in \N$ and $c_R > 0$ such that
    \begin{equation}\label{E: TT* big}
        (\sigma * \widetilde\sigma)^{*N}(x) > c_R \text{ for all } x \in B(0,R).
    \end{equation}

    By \eqref{E: E-L} and \eqref{E: Jensen},
	\begin{equation}\label{E: E-L lower bound}
		f \gtrsim_N \big( (T^* T)^N f\big)^{(\frac{q-1}{p-1})^N}.
	\end{equation}
    Let $W \subset \X$ be compact. For sufficiently large $R > 0$,
    \[
        \inf_{x \in W} f(x) \geq \inf_{x \in B(0,R/2)} f(x) \geq c_R \inf_{x \in B(0,R/2)} \left( \int_{B(x, R)} f \right)^{(\frac{q-1}{p-1})^N} \gtrsim \left( \int_{B(0, R/2)} f \right)^{(\frac{q-1}{p-1})^N} > 0
    \]
    by \eqref{E: TT* big} and \eqref{E: E-L lower bound}.
\end{proof}

\section{Extremizers are smooth}

\begin{lemma}\label{L: upper bound}
    Let $f$ be a normalized, non-negative extremizer. 
    Then $f \in L^\infty(\X)$.
\end{lemma}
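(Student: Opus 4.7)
The plan is to bootstrap the integrability of $f$ using the Euler-Lagrange equation \eqref{E: E-L} and the Fourier decay of $\sigma$, iterating until Sobolev embedding places $Tf$ in $L^\infty$, at which point \eqref{E: E-L} directly yields $f \in L^\infty$. As in the positivity proof, I first reduce to the case $q \geq 2$ by duality: if $1 < p < q < 2$, then $(Tf)^{q-1}$ is a normalized, non-negative extremizer for the convolution operator $T^*\colon L^{q'}(\X) \to L^{p'}(\X)$, now in the regime $p' > q' > 2$; once the lemma is proven for $q \geq 2$, we obtain $\|Tf\|_\infty < \infty$, and \eqref{E: E-L} gives $f \lesssim \|Tf\|_\infty^{(q-1)/(p-1)} \in L^\infty$.

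The key estimate is a Sobolev upgrade of the $L^r$-mapping properties of $T$ and $T^*$ derived from Fourier decay. The hypothesis yields $T, T^*\colon L^2 \to W^{\alpha, 2}$, and complex interpolation with the trivial bounds $T, T^*\colon L^1 \to L^1$ and $T, T^*\colon L^\infty \to L^\infty$ produces $T, T^*\colon L^r \to W^{\beta(r), r}$ with $\beta(r) > 0$ for every $1 < r < \infty$. Composing with the Sobolev embedding $W^{\beta, r} \hookrightarrow L^{r_*}$, where $1/r_* = (1/r - \beta/d)_+$, gives $T, T^*\colon L^r \to L^{r_*}$ with $r_* > r$. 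Substituting into \eqref{E: E-L} produces the bootstrap
\[
    f \in L^r \implies Tf \in L^{r_*} \implies (Tf)^{q-1} \in L^{r_*/(q-1)} \implies T^*(Tf)^{q-1} \in L^{(r_*/(q-1))_*} \implies f \in L^{(p-1)(r_*/(q-1))_*}.
\]

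I would iterate this bootstrap to push $f$'s integrability past the threshold at which one more application of Sobolev embedding lands $Tf$ in $L^\infty$. If the single-step gain turns out to be insufficient for a given configuration of $p, q, \alpha, d$, I would instead work with higher iterates: the convolution kernel $(\sigma * \widetilde\sigma)^{*N}$ has Fourier transform bounded by $\langle\xi\rangle^{-2N\alpha}$ and compact support, so for $2N\alpha > d$ it is bounded and continuous; Young's inequality then gives $(TT^*)^N\colon L^r \to L^\infty$ for every $r \geq 1$. Iterating \eqref{E: E-L} a sufficient number of times accumulates enough smoothing factors to place $f$ in $L^\infty$.

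The main obstacle is the interplay between the nonlinear power $(\cdot)^{q-1}$ and the linear smoothing operators in \eqref{E: E-L}: one cannot simply commute a high convolution power past the nonlinearity. I would address this by a careful choice of interpolation exponents at each bootstrap step, ensuring a uniform multiplicative gain in integrability per iteration, and by invoking the lower bound from Lemma \ref{L: positivity} to avoid pathologies at points where $f$ would otherwise vanish when raised to the fractional power $1/(p-1)$. On $\R^d$, the entire argument is translation-invariant, so a uniform-in-translation local $L^\infty$ bound is automatically global.
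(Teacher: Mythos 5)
Your overall architecture mirrors the paper's: reduce to $q \geq 2$ by passing to $T^*$ and the dual extremizer $(Tf)^{q-1}$, bootstrap integrability via \eqref{E: E-L}, and then pass to $L^\infty$. The duality reduction at the start is correct and is exactly what the paper does. But there are two genuine gaps in the core argument.

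First, you assert that iterating the bootstrap gives ``a uniform multiplicative gain in integrability per iteration,'' but you never verify it, and it is not automatic. Writing the iteration in reciprocal exponents, one pass of \eqref{E: E-L} sends $t = 1/s$ to something of the form $\tfrac{1}{p-1}\mathcal{R}\bigl((q-1)\mathcal{R}(t)\bigr)$, where $\mathcal{R}$ encodes the $L^s \to L^{\mathcal{Q}(s)}$ gain. The factor $(q-1)/(p-1) > 1$ appearing from the powers is \emph{expanding}, not contracting, and the gain $\mathcal{R}(t) < t$ shrinks as $s \to \infty$ (both for the $L^p$-improving route and for your Sobolev-embedding route, since smoothing interpolated against the trivial $L^\infty \to L^\infty$ estimate degenerates). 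The iteration could perfectly well stall at a positive fixed point strictly inside $(0,1/p)$. The paper rules this out by constructing $\mathcal{Q}$ so that $\mathcal{R}\colon 1/s \mapsto 1/\mathcal{Q}(s)$ is \emph{convex}, observing $\mathcal{S}(0)=0$ and $\mathcal{S}(1/p)<1/p$ (the latter from non-endpointness), and then using convexity to get $\mathcal{S}(t) < t$ for all $t \in (0,1/p]$, so the decreasing sequence of iterates can only converge to the fixed point $0$. Without some such structural argument your bootstrap is incomplete, and even then it only gives $f \in L^s$ for every finite $s$, not $f \in L^\infty$.

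Second, you correctly identify that the nonlinearity blocks applying $(TT^*)^N\colon L^r \to L^\infty$ directly, but your proposed remedy (``a careful choice of interpolation exponents'') does not resolve it. The paper's resolution is the pointwise Jensen-type inequality \eqref{E: Jensen}, namely $T(f^a)(x) \geq (Tf(x))^a$ for $a>1$, valid because $\sigma$ is a probability measure. Applied repeatedly to \eqref{E: E-L} (using $q \geq 2$ so the relevant exponents are $>1$), this yields $f \lesssim_N \bigl((T^*T)^N f^{(q-1)^N/(p-1)^{N-1}}\bigr)^{1/(p-1)}$, i.e.\ the power is moved \emph{onto $f$} and a genuine $2N$-fold convolution power $(\widetilde\sigma * \sigma)^{*N}$ acts linearly. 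Only then does Young's inequality close the argument: by Fourier decay $(\widetilde\sigma*\sigma)^{*N} \in L^{\min\{2,p'\}}$ for $N$ large, and $f^{(q-1)^N/(p-1)^{N-1}} \in L^{\max\{2,p\}}$ by the first stage, giving $L^\infty$. This mechanism for commuting the power past the convolution is the missing idea your outline needs.
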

\begin{proof}
    We apply the same reduction as in the preceding proof to assume that $q \geq 2$.

    Let $1 \leq s \leq 2$. Since $|\widehat{\sigma}| \lesssim \langle \cdot \rangle^{-\alpha}$, there exist $N \in \N$ and $0 \leq \theta \leq 1$ such that
    \begin{equation}\label{E: upper 1}
        \|(\widetilde{\sigma} * \sigma)^{*N}\|_s 
        \leq \|(\widetilde{\sigma} * \sigma)^{*N}\|_1^\theta \|(\widetilde{\sigma} * \sigma)^{*N}\|_2^{1-\theta} 
        = \|\widehat{\sigma}\|_{4N}^{2N(1-\theta)}
        \lesssim \left( \int \langle \cdot \rangle^{-4N\alpha}\right)^{\frac{1-\theta}{2}}
        < \infty.
    \end{equation}
	
    We will now show that	
    \begin{equation}\label{E: higher L^p}
		f \in \bigcap_{s \in [p,\infty)} L^s(\X).
	\end{equation}
	by iterating \eqref{E: E-L}.
    Since $T\colon L^\infty(\X) \rightarrow L^\infty(\X)$ is bounded and $(p,q)$ is non-endpoint, we can use complex interpolation to construct a function $\mathcal{Q}\colon (1,\infty) \rightarrow (1,\infty)$ such that $T\colon L^s(\X) \rightarrow L^{\mathcal{Q}(s)}(\X)$ is bounded for all $s$ and $\mathcal{Q}(p) > q$.
    The adjoint, $T^*\colon L^s(\X) \rightarrow L^{\mathcal{Q}(s)}(\X)$, is also bounded as it is simply $T$ composed with a reflection.
    Furthermore, by interpolation, we can define $\mathcal{Q}$ so that $\mathcal{R}\colon \tfrac{1}{s} \mapsto \mathcal{Q}(s)^{-1}$ is convex. 
    By \eqref{E: E-L},
    \[
		\|f\|_{\mathcal{Q}\left(\frac{\mathcal{Q}(s)}{q-1}\right)(p-1)} 
        \simeq \|\left(T^*(Tf)^{q-1}\right)^\frac{1}{p-1}\|_{\mathcal{Q}\left(\frac{\mathcal{Q}(s)}{q-1}\right)(p-1)} 
        \lesssim \|(Tf)^{q-1}\|_\frac{\mathcal{Q}(s)}{q-1}
        \lesssim \|f\|_s^\frac{q-1}{p-1}
    \]
	for all $p \leq s < \infty$, so it suffices to prove that
	\[
		\lim_{n\rightarrow \infty} \mathcal{S}^n(\tfrac1p) = 0 \quad\text{where}\quad \mathcal{S}(t) = \frac{1}{p-1}\cdot \mathcal{R}\big(\mathcal{R}(t)\cdot(q-1)\big)
	\]
	for all $0 \leq t \leq \tfrac1p$. 
    Indeed, by direct computation, $\mathcal{S}(0) = 0$, $\mathcal{S}(\tfrac1p) < \tfrac1p$, and $\mathcal{S}$ is convex. 
    Therefore, the sequence $\mathcal{S}^n(\tfrac1p)$ decreases and converges to some $t_0 \in [0,\tfrac1p]$. 
    Since $\mathcal{S}$ is continuous, this implies that $\mathcal{S}(t_0) = t_0$, and hence $t_0 = 0$. 
    This proves \eqref{E: higher L^p}.

    We apply \eqref{E: Jensen} to \eqref{E: E-L} to show that
	\begin{equation}\label{E: continuity 1}
        f \lesssim_N \left( (T^*T)^N f^{\frac{(q-1)^N}{(p-1)^{N-1}}}\right)^\frac{1}{p-1}.
	\end{equation}	
    Since $f^{\frac{(q-1)^N}{(p-1)^{N-1}}} \in L^{\max\{2, p\}}(\X)$ by \eqref{E: higher L^p} and $(\widetilde{\sigma} * \sigma)^{*N} \in L^{\min\{2,p'\}} (\X)$ by \eqref{E: upper 1}, $(T^* T)^N f^{\frac{(q-1)^N}{(p-1)^{N-1}}} \in L^\infty(\X)$ by Young's inequality.
    By \eqref{E: continuity 1},	$f \in L^\infty(\X)$.
\end{proof}

\begin{lemma}\label{L: smoothing estimate}
	There exists $\kappa > 0$ such that for all $s \geq 0$ and $\psi_1 \in \mathcal{D}_+(\X)$, there
	exists $\psi_2 \in \mathcal{D}_+(\X)$ such that
	\[
		\|\psi_1 T\phi\|_{W^{s+\kappa, p}(\X)} \lesssim \|\psi_2 \phi\|_{W^{s,p}(\X)}
	\]
	and $T\phi|_{\supp \psi_1} \equiv T(\psi_2 \phi)|_{\supp \psi_1}$ for all $\phi \in W^{s,p}(\X)$.
    The implicit constant depends on $\psi_1$.
\end{lemma}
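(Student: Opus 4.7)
The plan is to first establish a global smoothing estimate $T \colon W^{s,p}(\X) \to W^{s+\kappa,p}(\X)$ and then convert it into the localized estimate by exploiting the compact support of $\sigma$.

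For the global bound I start from two facts: $T \colon L^r(\X) \to L^r(\X)$ is bounded for every $r \in [1,\infty]$ because $\sigma$ is a probability measure (Young's inequality), and $T \colon L^2(\X) \to W^{\alpha,2}(\X)$ is bounded by Plancherel and the Fourier decay hypothesis $|\widehat{\sigma}(\xi)| \lesssim \langle\xi\rangle^{-\alpha}$. If $p = 2$ the desired bound is the second one; if $p \neq 2$, I choose $\tilde p \in (1,\infty)$ so that $p$ lies strictly between $\tilde p$ and $2$ and apply complex interpolation of Bessel potential spaces to $T\colon L^{\tilde p} \to L^{\tilde p}$ and $T\colon L^2 \to W^{\alpha,2}$, yielding $T\colon L^p \to W^{\kappa, p}$ with $\kappa$ equal to $\alpha$ times the interpolation parameter, hence strictly positive. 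Because $T$ is a Fourier multiplier it commutes with $(1-\Delta)^{s/2}$, so the estimate upgrades to $T\colon W^{s,p}(\X) \to W^{s+\kappa,p}(\X)$ for every $s \geq 0$ (the same argument works on $\T^d$ with the discrete analogue).

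For the localization, since $\sigma$ is supported in $B(0,c_d)$, the value $T\phi(x) = \int \phi(x-y)\, d\sigma(y)$ depends only on the restriction of $\phi$ to $x - \supp \sigma$. I pick $\psi_2 \in \mathcal{D}_+(\X)$ with $\psi_2 \equiv 1$ on a compact neighborhood of the compact set $\supp\psi_1 - \supp\sigma$; then for every $x \in \supp \psi_1$ the integrand in $T\phi(x) - T(\psi_2 \phi)(x)$ vanishes, and the two functions agree on $\supp\psi_1$. Using also that multiplication by the fixed Schwartz function $\psi_1$ is bounded on $W^{s+\kappa, p}(\X)$ with norm depending only on $\psi_1$, I conclude
\[
    \|\psi_1 T\phi\|_{W^{s+\kappa,p}} = \|\psi_1 T(\psi_2 \phi)\|_{W^{s+\kappa,p}} \lesssim_{\psi_1} \|T(\psi_2\phi)\|_{W^{s+\kappa,p}} \lesssim \|\psi_2 \phi\|_{W^{s,p}}.
\]

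The only subtle step is the global smoothing estimate. The Fourier decay produces a genuine positive gain only at $L^2$, so a naive interpolation between $T\colon L^p \to L^p$ and $T\colon L^2 \to W^{\alpha,2}$ would place $p$ at an endpoint of the interpolation segment and produce no gain. Bracketing $p$ strictly between some $\tilde p \in (1,\infty)$ and $2$ --- which is always possible when $p \in (1,\infty)$ --- is what rescues the argument, at the price of letting $\kappa$ depend on how close $p$ is to $2$.
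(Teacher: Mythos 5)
Your proof is correct and follows essentially the same route as the paper: global interpolation of $T\colon W^{s,r}\to W^{s,r}$ against the $L^2$ Fourier-decay gain to get $T\colon W^{s,p}\to W^{s+\kappa,p}$, followed by localization via the compact support of $\sigma$ and boundedness of multiplication by the cutoff (the paper cites Kato--Ponce for that last step). Your explicit remark that $p$ must be bracketed strictly between some $\tilde p$ and $2$ to avoid a degenerate interpolation endpoint makes precise a point the paper leaves implicit.
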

\begin{proof}
	Since $T\colon L^r(\X) \rightarrow L^r(\X)$ is bounded for all $1 \leq r \leq \infty$ and $T$ commutes with translations, we see that $T\colon W^{s,r}(\X) \rightarrow W^{s,r}(\X)$ is bounded for all $1 < r < \infty$ and $s \geq 0$. 
    Furthermore, by the Fourier decay assumption, there exists $\alpha > 0$ such that $T\colon W^{s,2}(\X) \rightarrow W^{s+\alpha, 2}(\X)$. 
    Therefore, we may interpolate to find $\kappa > 0$ such that $T\colon W^{s,p}(\X) \rightarrow W^{s+\kappa, p}(\X)$ for all $s \geq 0$.

	By the compact support of $\psi_1$ and $\sigma$, there exists $\psi_2 \in \mathcal{D}_+(\X)$ such that $T\phi|_{\supp \psi_1} \equiv T(\psi_2 \phi)|_{\supp \psi_1}$ for all $\phi \in L^p(\X)$. 
    Therefore, by the Kato-Ponce inequality (\cite{GulisashviliKon1996}*{Theorem 1.4}), we have that
	\begin{align*}
        \|\psi_1 T\phi\|_{W^{s+\kappa,p}(\X)} 
        &= \|\psi_1 T(\psi_2\phi)\|_{W^{s+\kappa,p}(\X)} \\
        &\lesssim \|\psi_1\|_{W^{s+\kappa,\infty}(\X)} \|T(\psi_2 \phi)\|_p + \| \psi_1 \|_{\infty} \| T(\psi_2 \phi) \|_{W^{s+\kappa, p}(\X)} \\
        &\lesssim_{\psi_1} \| \psi_2 \phi \|_{p} + \| \psi_2 \phi \|_{W^{s,p}(\X)} 
        \lesssim \| \psi_2 \phi \|_{W^{s,p}(\X)}.
	\end{align*}
\end{proof}

\begin{lemma}\label{L: power estimate}
	Let $f$ be a normalized, non-negative extremizer. For all $s > 0$, $a \neq 1$, and $\psi_1 \in \mathcal{D}_+(\X)$, there exists
	$\psi_2 \in \mathcal{D}_+(\X)$ such that
	\[
		\|\psi_1 f^a\|_{W^{s,p}(\X)} \lesssim \|\psi_2 f\|_{W^{s,p}(\X)} \left(1 + \|\psi_2
		f\|_\infty^{\max\{1,s\}}\right).
	\]
    The implicit constant depends on both $\psi_1$ and $f$.
\end{lemma}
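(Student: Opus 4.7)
\emph{Proof proposal.} The plan is to reduce the problem to a standard Moser composition estimate by exploiting that $f$ is bounded away from zero on any compact set. By the positivity conclusion of Theorem \ref{T: existence} (already established above), there exist $c_0 > 0$ and an open neighborhood $U$ of $\supp \psi_1$ with compact closure on which $f \geq c_0$; this $c_0$ depends on $f$, which accounts for the $f$-dependence of the implicit constant in the conclusion. Choose $\psi_2 \in \mathcal{D}_+(\X)$ with $\psi_2 \equiv 1$ on $U$, so that $\psi_2 f = f \geq c_0$ pointwise on $\supp \psi_1$.

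Next, I would fix $\chi \in C^\infty(\R)$ with all derivatives bounded, such that $\chi(t) = t^a$ for $t \geq c_0$ and $\chi(t) = 0$ for $t \leq c_0/2$; the size of $\chi$ and its derivatives depends on $c_0$, and hence on $f$. On $\supp \psi_1$ we have $\chi(\psi_2 f) = (\psi_2 f)^a = f^a$, so
\[
    \psi_1 f^a = \psi_1 \chi(\psi_2 f)
\]
identically on $\X$. This replaces the possibly singular map $t \mapsto t^a$ with a genuinely smooth one, pushing all dependence on the lower bound $c_0$ into constants.

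I would then apply Kato-Ponce as in the proof of Lemma \ref{L: smoothing estimate} to obtain
\[
    \|\psi_1 \chi(\psi_2 f)\|_{W^{s,p}(\X)} \lesssim_{\psi_1} \|\chi(\psi_2 f)\|_{W^{s,p}(\X)} + \|\chi(\psi_2 f)\|_p,
\]
and control the second term by $\|\chi'\|_\infty \|\psi_2 f\|_p$ via the mean value theorem and $\chi(0) = 0$. For the principal Sobolev term, I would invoke the classical Moser-type composition estimate
\[
    \|\chi(u)\|_{W^{s,p}(\X)} \lesssim \bigl(1 + \|u\|_\infty^{\max\{0, s-1\}}\bigr) \|u\|_{W^{s,p}(\X)}
\]
valid for smooth $\chi$ with bounded derivatives and $\chi(0) = 0$, applied with $u = \psi_2 f$. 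Since $\max\{0, s-1\} \leq \max\{1, s\}$, splitting into the cases $\|\psi_2 f\|_\infty \leq 1$ and $\|\psi_2 f\|_\infty > 1$ absorbs the weaker exponent into the slightly wasteful one claimed in the statement.

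The main technical obstacle is justifying the Moser composition estimate at fractional $s$ in the Bessel potential scale. For integer $s$ it follows directly from the Leibniz rule and Gagliardo-Nirenberg interpolation, but at general $s > 0$ one needs the paraproduct calculus or the nonlinear superposition results of Runst-Sickel. The hypothesis $a \neq 1$ plays no substantive role in this argument beyond ruling out the trivial case.
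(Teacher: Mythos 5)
Your argument follows essentially the same route as the paper's: use the positivity lower bound on $\supp\psi_1$ and the $L^\infty$ bound on $f$ to replace the singular map $t\mapsto t^a$ by a smooth composition, strip the $\psi_1$ cutoff via Kato--Ponce, then invoke the Runst--Sickel superposition estimate and absorb the exponent into $\max\{1,s\}$. The only detail missing is that for $a>1$ your $\chi$, required to equal $t^a$ for \emph{all} $t\geq c_0$, does not have bounded derivatives; one must also use $\|f\|_\infty<\infty$ (from Lemma \ref{L: upper bound}) to cap the modification, i.e.\ require $\chi(t)=t^a$ only on $[c_0,\|f\|_\infty]$ and extend smoothly, exactly as the paper does with $\eta$. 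With that one-line fix the proof matches the paper's.
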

\begin{proof}
    By Theorem \ref{T: existence},
    \[
        \inf_{x \in \supp \psi_1} f(x) > C > 0 \quad\text{and}\quad \|f\|_\infty < \infty.
    \]
    Therefore, there exist $\eta \in C^\infty(\R^{\geq 0} \rightarrow \R^{\geq 0})$ and $\psi_2 \in \mathcal{D}_+(\X)$ such that $\eta(0) = 0$, $\eta(t) = t^a$ for all $C \leq t \leq \|f\|_\infty$, and thus
	\begin{equation}
		\label{E: eta}
		f^a|_{\supp \psi_1} \equiv \eta\circ (\psi_2 f)|_{\supp \psi_1}.
	\end{equation}
    Note that $\eta$ depends on both $f$ and $\psi_1$, so implicit constants in this proof are allowed to depend on both functions.
    By \cite{GulisashviliKon1996}*{Theorem 1.4},
    \begin{align*}
		\|\psi_1 f^a\|_{W^{s,p}(\X)} 
        &= \|\psi_1(\eta \circ (\psi_2 f))\|_{W^{s,p}(\X)} \\
        &\lesssim \|\psi_1\|_\infty \|\eta \circ (\psi_2 f)\|_{W^{s,p}(\X)} + \|\eta \circ (\psi_2 f)\|_p \|\psi_1\|_{W^{s,\infty}(\X)} \\
        &\lesssim_{\psi_1} \|\eta \circ (\psi_2 f)\|_{W^{s,p}(\X)} 
    \end{align*}
	If $0 < s < 1$,
    \[
        \|\eta \circ (\psi_2 f)\|_{W^{s,p}(\X)}
        \lesssim \|\psi_2 f\|_{W^{s,p}(\X)}\|\eta' \circ (\psi_2 f)\|_\infty
        \lesssim \|\psi_2 f\|_{W^{s,p}(\X)}\left( 1 + \|\psi_2 f\|_\infty\right)
    \]
    by \cite[Section 5.3.6 Theorem 1]{RunstSickel1996}.
    If $s \geq 1$,
	\[
		\|\eta \circ (\psi_2 f)\|_{W^{s,p}(\X)} \lesssim \|\psi_2
		f\|_{W^{s,p}(\X)}\left( 1 + \|\psi_2 f\|_\infty^s \right)
	\]
	by \cite[Section 5.3.6 Theorem 2]{RunstSickel1996}.
    In either case, the lemma is proved.
\end{proof}

Define the Euler-Lagrange operator
\[
	S\phi = \|T\|_{p,q}^{-\frac{q}{p-1}} \left(T^* (T\phi)^{q-1}\right)^\frac{1}{p-1}
\]
for all $\phi \in L^p_+(\X)$.

\begin{proof}[Proof of Theorem \ref{T: existence} smoothness]
    Applying Theorem \ref{T: existence}, we let $f$ be a normalized, non-negative extremizer.

	Let $\Omega \subset \X$ be a bounded domain and let $\psi_1 \in \mathcal{D}_+(\X)$ be such that $\psi_1|_\Omega \equiv \Ind$. 
    By Lemma \ref{L: smoothing estimate} and Lemma \ref{L: power estimate} (if $p=2$ or $q=2$, we skip one application), there exist $\kappa > 0$ and $\psi_2 \in \mathcal{D}_+(\X)$ such that
	\[
		\|\psi_1 Sf\|_{W^{s+\kappa,p}(\X)} \lesssim_s \|\psi_2 f\|_{W^{s,p}(\X)}
	\]
	for all $s \geq 0$. 
    Iterating, we obtain
	\[
		\|\psi_1 Sf\|_{W^{s,p}(\X)} \lesssim_s \|f\|_p
	\]
    for all $s \geq 0$, and thus $\psi_1 f \in C^\infty_{loc}(\X)$ by \eqref{E: E-L} and the Sobolev embedding theorm. 
    Since $\psi_1 f|_\Omega \equiv f|_\Omega$, $f \in C^\infty_{loc}(\X)$.
\end{proof}

\bibliography{exts_for_sphere.bib}

\end{document}